\newcommand{\sO}{\mathcal{O}}
\newcommand{\sU}{\mathcal{U}}
\newcommand{\sV}{\mathcal{V}}
\newcommand{\sS}{\mathcal{S}}
\newcommand{\sF}{\mathcal{F}}
\renewcommand{\I}{\text{\Romannum{1}~}}
\renewcommand{\II}{\text{\Romannum{2}~}}
\newcommand{\Is}{\Romannum{1}'s~}
\newcommand{\IIs}{\Romannum{2}'s~}
\newcommand{\rmg}{G^*_{\text{fin}}(\sO,\sO)}
\newcommand{\mg}{G_{\text{fin}}(\sO,\sO)}
\newcommand{\krbg}[1]{G_{#1}(\sO,\sO)}
\newcommand{\rbg}{G_1(\sO,\sO)}
\newcommand{\refine}{\wedge}
\renewcommand{\cl}[1]{\overline{#1}}
\newcommand{\conc}{{}^\smallfrown}
\newcommand{\res}{\restriction}
\newcommand{\lh}{\text{lh}}
\newcommand{\wins}{\text{wins~}}
\newcommand{\zf}{\mathsf{ZF}}
\newcommand{\zfc}{\mathsf{ZFC}}
\renewcommand{\emptyset}{\smallemptyset}
\title{Equivalence of the Rothberger and 2-Rothberger Games for Hausdorff spaces}
\begin{document}
\authorlogan
\authorlior
\authornathaniel
\authorsteve

\pagenumbering{arabic}

\begin{abstract}
We prove that in any Hausdorff space, the Rothberger game is
equivalent to the $k$-Rothberger game, i.e. the game in which player \II
chooses $k$ open sets in each move. This result follows from a more
general theorem in which we show these games are equivalent to
a game we call the restricted Menger game. In this game
\I knows immediately in advance of playing
each open cover how many open sets \II will choose from that open cover. 
This result illuminates the relationship between the
Rothberger and Menger games in Hausdorff spaces. The equivalence of these
games answers a question posed by Aurichi, Bella, and Dias \cite{AurBelDias2016},
at least in the context of Hausdorff spaces.

\end{abstract}

\thanks{The authors wish to thank the organizers of the conference
{\em Frontiers of Selection Principles} held at Cardinal Stefan Wysz\'{n}ski
University in Warsaw in August 2017 where the results of this paper were presented.}

\maketitle

\section{Introduction}  \label{sec:intro}
Let $X$ be a topological space. Let $\sO$ denote the collection of open covers of $X$.
The {\em Menger game} on $X$ is the two player game
where at each round $n$ of the game player \I first plays an open cover $\sU_n\in \sO$ of $X$,
and player \II responds by playing a finite subset $U_n^0,\dots, U_n^{k_n-1}$ of $\sU_n$.
Player \II wins the run of the game if $X=\bigcup_n \bigcup_{i<k_m} U_n^i$. We denote
the Menger game by $\mg$. The notation reflects the facts that \I is playing from $\sO$,
\II is trying to build an element of $\sO$, and \II is picking a finite subset from
\Is moves at each round. 
The {\em Rothberger game} \cite{Rothberger38}, $\rbg$,  on $X$ is the game where player \I
plays at round $n$ an open cover $\sU_n\in \sO$ and player \II plays a single $U_n \in \sU_n$. Again, player \II
wins the run of the game iff $X=\bigcup_n U_n$. The $k$-Rothberger game $\krbg{k}$ is the variation
of the Rothberger game where player \II plays $k$ sets from \Is cover at each round. 
A natural extension of this is the game $\krbg{f}$ where $f \colon \omega\to \omega^{>0}$. In this game,
at each round $n$ player \II plays $f(n)$ sets from player \Is move $\sU_n$. A still further extension
of the games is the {\em restricted Menger game}  $\rmg$, which we define precisely below,
where player \II decides at the start of each round $n$ how many sets he will get to choose from \Is
play $\sU_n$. It is clear that 
\[
\II \wins \rmg \Rightarrow \forall f\ \II \wins \krbg{f} \Rightarrow \forall k \ \II \wins \krbg{k}
\Rightarrow \II \wins \rbg
\]

Our main result, Theorem~\ref{mt}, is that for all $T_2$ spaces $X$, the above games are all
equivalent. Recall two games are said to be equivalent if whenever one of the players has
a winning strategy in one of the games, then that same player has a winning strategy
in the other game. We note that the equivalence of the above games for arbitrary spaces
is no stronger than the equivalence for $T_0$ spaces (by considering the
$T_0$ quotient of an arbitrary space).
On the other hand, it is well known that the full Menger game
$\mg$ is not equivalent to the above mentioned games. For example, player \II wins the
Menger game on $\R$, or any $\sigma$-compact space, while \I has a winning strategy in
$\rbg$ on $\R$ (\I can easily play to ensure that $\lambda(\bigcup U_n)<\epsilon$
for any given $\epsilon >0$).

The games mentioned above are closely related to selection principles on the space $X$.
These types of covering games and selection principles were extensively studied by Scheepers
and others, see for example \cite{Scheepers96}, \cite{SakaiAndScheepers04}. 
Recall that $X$ has the {\em Menger property}, denoted $\sS_{\text{fin}}(\sO,\sO)$,
if whenever $\{ \sU_n\}_{n \in \omega}$ is a sequence
of open covers of $X$, then there is a sequence $\{\sF_n \}_{n \in \omega}$, where each
$\sF_n$ is a finite subset of $\sU_n$, such that $X= \bigcup_n \cup \sF_n$.
Similarly, $X$ has the {\em Rothberger property}, denoted $\sS_1(\sO,\sO)$, if 
whenever $\{ \sU_n\}_{n \in \omega}$ is a sequence
of open covers of $X$, then there is a sequence $U_n \in \sU_n$ such that $X=\bigcup_n U_n$.
There are two theorems which relate the games with the corresponding selection principles.
One theorem, due to Hurewicz \cite{Hurewicz25} (see also \cite{Scheepers96}),
says that for any space $X$ the selection
principle $\sS_{\text{fin}}(\sO,\sO)$ (i.e., $X$ having the Menger property) is equivalent
to \I not having a winning strategy in $\mg$. Another theorem, due to Pawlikowski \cite{Pawlikowski1994},
says that for any space $X$ the selection property $\sS_1(\sO,\sO)$ (i.e., $X$
having the Rothberger property) is equivalent to \I not having a winning strategy in
$\rbg$. The equivalence of $\sS_k(\sO,\sO)$ (where $k\in\omega$)
and $\sS_1(\sO,\sO)$ was shown in \cite{GFandTM95} and noted by the authors of \cite{AurBelDias2016}.

The Rothberger game $\rbg$, for any space $X$, has a dual version called the
{\em point-open game}. In this game, \I plays at each round $n$ a point $x_n\in X$,
and \II then plays an open set $U_n$ with $x_n \in U_n$. Player \I wins the run
of the game iff $X= \bigcup_n U_n$. A theorem of Galvin \cite{Galvin78} says that (for any $X$)
these games are {\em dual}, that is, one of the players has a winning strategy in one of the games iff
the other player has a winning strategy in the other game. A natural variation of the point-open
game is the {\em finite-open game}, where \I plays at each round $n$ a finite set $F_n \subseteq X$,
and \II plays an open set $U_n$ with $F_n \subseteq U_n$. Player \I again wins the run iff
$X=\bigcup_n U_n$. It is easy to see that for any $X$ that the point-open game is equivalent
to the finite-open game.

Using these dual games (specifically the finite-open game)
simplifies the presentation of our main result. This observation was noted by
R.\ Dias, whom we thank.

\section{Equivalence of Restricted Menger and Rothberger Games} \label{sec:mt}

We define a variation of the Menger game which we call the {\em
restricted Menger game}, denoted by $\rmg$. The rounds of this game
are as in the Menger game except that at the start of round $n$
player \II will make an initial move, which must be a positive integer
$k_n$, which is a declaration of how many open sets \II intends to select this round. As in the Menger game, \I will then play an open cover
$\sU_n\in \sO$, and \II will then respond by choosing $k_n$ of the
sets from $\sU_n$, which we denote $U_n^0,\dots,U_n^{k_n-1}$.  Player
\II wins the run of the game iff $X= \bigcup_n \bigcup_{i < k_n}
U_n^i$.

\begin{figure}[h]
\begin{tikzpicture} [node distance=0.125cm]

\foreach \y in {1}{
\node (begin\y) at (0,-\numexpr2*\y\relax) {};
\node (s\y) [right=of begin\y] {};
\node (G\y) [left=0.5cm of s\y] {$G_{\text{fin}}^*(\sO, \sO)$};
\node (G\y{}I0) [above=of s\y] {I};
\node (G\y{}II0) [below=0.3cm of G\y{}I0] {II};
\node (G\y{}I1) [above right=0.25cm and 0.125cm of G\y{}II0] {};
\node (G\y{}II1) [below right=0.25cm and 0.125cm of G\y{}I0] {$k_0$};
\foreach \x in {2, 3}
{
\node (G\y{}I\x) [above right=0.25cm and 0.125cm of G\y{}II\the\numexpr\x-1\relax] {$\sU_{\the\numexpr\x-2\relax}$};
\node (G\y{}IIos\x) [below right=0.25cm and 0.125cm of G\y{}I\the\numexpr\x\relax] {$\{U_{\the\numexpr\x-2\relax}^i\}_{i<k_\the\numexpr\x-2\relax}$};
\node (G\y{}II\x) [right=0.25cm of G\y{}IIos\the\numexpr\x\relax] {$k_{\the\numexpr\x-1\relax}$};
\draw [dashed] (G\y{}IIos\x)+(0.95 cm, -0.5 cm)  --+(0.95, 1 cm);
}
\node (G\y{}I4) [above right=0.25cm and 0.125cm of G\y{}II\the\numexpr4-1\relax] {$\sU_{\the\numexpr4-2\relax}$};
\node (G\y{}IIos4) [below right=0.25cm and 0.125cm of G\y{}I\the\numexpr4\relax] {$\{U_{\the\numexpr4-2\relax}^i\}_{i<k_\the\numexpr4-2\relax}$};
\node (end) [above right=0.05cm and 0.25cm of G\y{}IIos4] {$\dots$};
}

\end{tikzpicture}

\caption{An illustration of the game $G_{\text{fin}}^*(\sO, \sO)$.}

\end{figure}
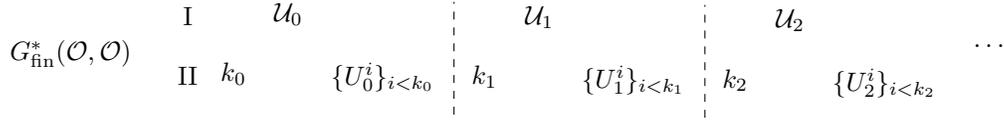

\begin{theorem} \label{mt}
Let $X$ be a $T_2$ space. Then the restricted Menger game $\rmg$ is equivalent to the Rothberger game $\rbg$. 
\end{theorem}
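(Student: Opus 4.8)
The plan is to prove the cycle of implications closing the loop: we already know $\II \wins \rmg \Rightarrow \II \wins \rbg$, and trivially $\I \wins \rbg \Rightarrow \I \wins \rmg$ (since $\rmg$ gives \I at least as much information — the number $k_n$ is announced in advance, so a strategy for \I in $\rbg$ can simply ignore it... wait, this needs care since in $\rmg$ \I must respond to a declared $k_n$; but \I can play against $\rmg$ by treating each of \II's selections as $k_n$ separate rounds of $\rbg$, which works fine). So the real content is two implications: (a) $\II \wins \rbg \Rightarrow \II \wins \rmg$, which I expect to be nearly immediate since $\rbg$ is the "hardest" game for \II among those listed — \II can use a $\rbg$-strategy to answer each of the $k_n$ sets one at a time, feeding the same cover $\sU_n$ to the strategy $k_n$ times; and (b) the hard direction, $\I \wins \rmg \Rightarrow \I \wins \rbg$. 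Equivalently, by Galvin's duality and the equivalence of point-open with finite-open, it suffices to produce a winning strategy for \II in the finite-open game from a winning strategy for \I in $\rmg$.

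**The main construction.** Suppose $\sigma$ is a winning strategy for \I in $\rmg$. I want a winning strategy for \I in $\rbg$ — or dually, leveraging the hint in the excerpt, a winning strategy for \II in the finite-open game, which I then dualize back. Here is the idea for attacking $\rbg$ directly. Fix \Is strategy $\sigma$ in $\rmg$. Player \I in $\rbg$ will simulate a run of $\rmg$, but the difficulty is that in $\rmg$, \I must commit to knowing $k_n$ before playing $\sU_n$, whereas \II's single response in $\rbg$ only ever "fills" one slot. The resolution is a bookkeeping / amalgamation argument: \I in $\rbg$ runs countably many simulated copies of $\rmg$-plays in parallel (one for each possible finite sequence of declared values $k_0, \dots, k_{n-1}$), and at round $n$ plays the common refinement (intersection-cover, using that $X$ is... actually just a finite meet of open covers, always a cover) of the finitely many covers $\sigma$ would produce across the relevant simulated positions. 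When \II responds with a single open set $U_n$, this set lies in each of those covers and hence is a legal $\rmg$-response in each simulation; we extend each simulation by recording $U_n$ appropriately, and crucially we get to interleave so that over infinitely many rounds, each simulated $\rmg$-run is fed a genuine sequence of $\rmg$-legal moves. Since $\sigma$ is winning in $\rmg$, each such simulated run is a loss for \II there, i.e. the union of \II's picks does not cover $X$; a diagonal/pigeonhole argument then shows \II's picks in the actual $\rbg$-run fail to cover $X$, so \I wins $\rbg$.

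**Where Hausdorffness enters, and the main obstacle.** The parallel-simulation sketch above is too naive as stated, because a single $\rbg$-move $U_n$ must simultaneously serve as the $0$th, $1$st, ..., $(k-1)$th selection in a simulation that declared $k$, and one open set cannot by itself be the whole of a $k$-element selection that $\sigma$ may be "expecting" to eventually cover a specific compact-like chunk. This is exactly where the $T_2$ hypothesis is essential: the standard trick (as in the $\sS_k = \sS_1$ arguments of \cite{GFandTM95}) is that in a Hausdorff space one can, given finitely many points $x_0, \dots, x_{k-1}$ and an open cover, find a single open set containing a chosen $x_i$ while the complement of a suitable shrunken neighborhood of the others stays manageable — more precisely, Hausdorffness lets us separate points so that a "$k$-fold cover obligation" can be discharged by $k$ successive single-set moves against covers that have been pre-refined to isolate one point at a time. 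So the refined plan is: \I in $\rbg$ also tracks a finite set of "target points" not yet covered (mimicking the finite-open game on the dual side), uses $T_2$ to refine each simulated cover so that any single \II-response can be unambiguously assigned to cover one target point, and thereby faithfully translates one round of $\rbg$ into one "sub-step" of a round of $\rmg$. I expect the main obstacle to be precisely this bookkeeping: organizing the simulation so that (i) every declared-$k$ round of every simulated $\rmg$-play eventually receives all $k$ of its selections from genuine $\rbg$-rounds, (ii) the refinements needed for the $T_2$ separation at stage $n$ depend only on information available to \I at stage $n$, and (iii) the diagonalization over the countably many simulations produces, in the limit, at least one simulated $\rmg$-run that is a completed legal play lost by \II — yielding an uncovered point witnessing \I's win in $\rbg$. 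Once the combinatorial scheduling is set up correctly, verifying that $\sigma$ winning transfers to the constructed $\rbg$-strategy should be routine.
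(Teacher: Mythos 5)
Your decomposition of the theorem into implications has the trivial and nontrivial directions interchanged, and as a result the proposal never engages with the actual content of the theorem. First, the direction you label as ``the hard direction,'' namely $\I \wins \rmg \Rightarrow \I \wins \rbg$, is immediate: a run of $\rbg$ is literally a run of $\rmg$ in which \II declares $k_n=1$ every round, so \I simply uses the $\rmg$ strategy against the constant declaration $1$. Your entire ``main construction'' (the countably many parallel simulations, the common refinements, the scheduling) is machinery built to solve a difficulty that does not exist; moreover, this is where you propose to use the $T_2$ hypothesis, which is not where it is needed. Second, the direction you dismiss as ``nearly immediate,'' $\I \wins \rbg \Rightarrow \I \wins \rmg$, is genuinely nontrivial, and your parenthetical fix --- treating \IIs $k_n$ selections as $k_n$ separate rounds of $\rbg$ --- does not work: a strategy for \I in $\rbg$ produces its cover at sub-round $i+1$ only after seeing \IIs pick at sub-round $i$, whereas in $\rmg$ player \I must commit to a single cover $\sU_n$ before seeing any of the $k_n$ picks. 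One cannot pre-refine over all possible picks, since that is an infinite refinement. The correct route is Pawlikowski's theorem: $\I \wins \rbg$ implies $X$ fails $\sS_1(\sO,\sO)$, which yields a fixed (non-adaptive) bad sequence of covers $\{\sV_n\}$, and \I then plays finite common refinements $\sV_0 \refine \cdots \refine \sV_{k_0-1}$, etc.

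Most seriously, the genuinely hard direction, $\II \wins \rmg \Rightarrow \II \wins \rbg$, is taken as ``already known'' in your plan, presumably from the chain displayed in the introduction; but that chain is precisely what the theorem is establishing (a winning strategy for \II in $\rmg$ may declare large $k_n$'s, and there is no direct strategy transfer down to $\rbg$). This is where all the work and the Hausdorff hypothesis live. The key idea you are missing: given \IIs winning strategy $\tau$ in $\rmg$ with first declaration $k_\emptyset$, the set $C_\emptyset=\bigcap_{\sU\in\sO}\cl{\cup\tau(\sU)}$ has at most $k_\emptyset$ points (this is the $T_2$ lemma), so \I can play these finite sets in the finite-open game; Lindel\"ofness of the closed sets $X\setminus V$ then yields countably many covers witnessing $C_\emptyset\subseteq\cl{\cup\tau(\sU)}$ failures, and a tree of such covers indexed by $\omega^{<\omega}$ lets one extract, from any point left uncovered by \II, a branch along which $\tau$ loses --- contradiction. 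By Galvin duality this gives $\II \wins \rbg$. Without this (or some substitute for it), the proposal does not prove the theorem.
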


\begin{proof}
It is clear that if \I wins $\rmg$ then  \I wins $\rbg$. It is also clear that if \II wins  $\rbg$, then \II wins $\rmg$.

If \I wins $\rbg$, then by \cite{Pawlikowski1994}, $X$ does not satisfy the selection
principle $\sS_1(\sO,\sO)$. Thus, there is a sequence $\{\sV_n\}$ of
open covers of $X$ such that there is no sequence $V_n \in \sV_n$ with
$X=\bigcup_n V_n$. Then \I has a winning strategy in $\rmg$ by playing
as follows.  If \II first plays the integer $k_0$, then \I plays the
common refinement $\sU_0=\sV_0 \refine \cdots \refine
\sV_{k_0-1}$. \II will end the round by picking $k_0$ of the sets
$U_0^0, \dots, U_0^{k_0-1}$ from $\sU_0$. Player \I continues in this
manner. Because of the refining property of the $\sU_n$, there is a
sequence $V_n \in \sV_n$ with $\bigcup_n \bigcup_n^i U_n^i \subseteq
\bigcup_n V_n $. Since $\bigcup_n V_n \neq X$, \I has won this run of
$\rmg$.

Assume now that \II has a winning strategy $\tau$ in $\rmg$. We let
$\tau(\sU_0,\dots,\sU_n)$ denote the response of $\tau$ when \I plays open covers
$\sU_0,\dots,\sU_n$ (we are suppressing \IIs moves according to $\tau$ in this notation).
So, $\tau(\sU_0,\dots,\sU_n)$ is a finite subset of $\sU_n$. We let $\tau'(\sU_0,\dots,\sU_n)$
denote the integer that $\tau$ plays at the start of the next round, immediately after
$\tau(\sU_0,\dots,\sU_n)$ was played. 
By $\cup \tau(\sU_0,\dots,\sU_n)$ we mean the union of the (finitely many) open sets
in $\tau(\sU_0,\dots,\sU_n)$. Note that according to this notation
$|\tau(\sU_0,\dots,\sU_n)| = \tau'(\sU_0,\dots,\sU_{n-1})$.

We define a strategy $\sigma$ for \I in the finite open game on $X$.
We begin by explicitly describing $\sigma$ on the first round.
Let $k_\emptyset$ be $\tau$'s first (integer) move in $\rmg$.
Define
\[
C_\emptyset=  \bigcap_{\sU\in \sO} \cl {\cup \tau(\sU)}.
\]

The next Lemma is the only point in the proof where we use the assumption that $X$ is $T_2$.

\begin{lemma} \label{clem}
$|C_\emptyset| \leq k_\emptyset$. 
\end{lemma}
\begin{subproof}
Suppose towards a contradiction that $x_0, \dots,x_{k_\emptyset}$ are $k_\emptyset+1$ distinct points in
$C_\emptyset$. Since $X$ is $T_2$, there are open sets $U_0,\dots,U_{k_\emptyset}$ in $X$ with
$x_i \in U_i$ for all $i \leq k_\emptyset$ and with the $\{U_i\}$ pairwise disjoint. For each
$x \in X \setminus \{ x_i\}_{i \leq k_\emptyset}$ let $U_x$ be an open set containing $x$ such that
$U_x$ is disjoint from a neighborhood of $\{ x_i\}_{i \leq k_\emptyset}$ (using $T_2$ again). 
Let $\sU= \{ U_x\colon x \notin \{ x_i\}_{i \leq k_\emptyset} \} \cup \{ U_i\}_{i \leq k_\emptyset}$,
so $\sU$ is an open cover of $X$. $\tau(\sU)$ consists of  $k_\emptyset$ of the sets from $\sU$.
There is an $i \leq k_\emptyset$ such that $U_i \notin \tau(\sU)$. Then $x_i\notin \cl{ \cup \tau(\sU)}$,
a contradiction to $x_i \in C_\emptyset$. 
\end{subproof}

Then let $\sigma$'s first move in the finite open game be
$C_\emptyset$. Say \II responds with $V_0$. Before we continue, we
need to define some auxiliary sets which correspond to the position
$\{C_\emptyset, V_0\}$.  If $V_0$ was legal, then we note that $X
\setminus V_0 \subseteq X \setminus C_\emptyset$, and thus for each $x
\in X \setminus V_0$, there is some $\sU \in \sO$ such that $x \in X
\setminus \cl {\cup \tau(\sU)}$.  These sets form an open cover of $X
\setminus V_0$, which is a closed subspace of $X$, and thus is
Lindel\"{o}f, and so we fix $\{\sU_{(m)}(V_0)\}_{m \in
  \omega}=\{\sU_{(m)}\}_{m \in \omega}$ such that $\{ X\setminus \cl
{\cup \tau(\sU_{(m)}}\}_{m \in \omega}$ is a cover of $X \setminus
V_0$.

To define $\sigma$ in subsequent rounds, we need to dovetail various
moves on subsequences, using the previously defined open covers
$\sU_s$ for $s \in \omega^{<\omega}$, and for this purpose we fix any
bijection $\varphi \colon \omega^{<\omega}\to \omega$ with the
property that if $s \subseteq t$ then $\varphi(s)\leq \varphi(t)$.
For $s \in \omega^{<\omega}$ we let $\lh(s)$ denote the length of $s$.
Now in general, suppose we are at round $n$ in the finite open game,
and the moves $C_0, V_0, \dots, C_{n-1}, V_{n-1}$ have been played,
where $|C_i|=k_i$ for $i<n$.  Assume in addition that for each $j<n$
we have defined open covers $\sU_{\varphi^{-1}(j)\conc m}$ for all $m
\in \omega$ (which depend on the $V_j$ played thus far). Furthermore,
assume that the $C_j, V_j, \sU_{\varphi^{-1}(j)\conc m}$ for $j<n$
satisfy the following. Let $s=\varphi^{-1}(j)$, then:

\begin{enumerate}
\item \label{cp} $C_j= \bigcap_{\sU\in \sO} \cl {\cup \tau(\sU_{s\res
    1}, \sU_{s \res 2},\dots,\sU_{s\res\lh(s)}, \sU)}$.
\item \label{up} $\{ X\setminus \cl {\cup \tau(\sU_{s\res 1}, \sU_{s
    \res 2},\dots,\sU_{s\res\lh(s)}, \sU_{s\conc m}) }\}_{m \in
  \omega}$ is a cover of $X\setminus\bigcup_{i \leq \lh(s)}
  V_{\varphi(s \res i)}$.
\end{enumerate}

\noindent
Note that property~(\ref{up}) for $j$ is possible since the space
$X\setminus\bigcup_{i \leq \lh(s)} V_{\varphi(s \res i)}$ is
Lindel\"{o}f and $X\setminus\bigcup_{i \leq \lh(s)} V_{\varphi(s \res
  i)} \subseteq X\setminus\bigcup_{i \leq \lh(s)} C_{\varphi(s \res
  i)}$, and using property~(\ref{cp}) for the $C_i$ for $i \leq j$.

We define $\sigma$'s response to this position, and the necessary sets
$\sU_{t \conc m}$, in a similar manner to the base step.  Let
$t=\varphi^{-1}(n)$ and define $\sigma$'s response to be
\[
C_n=\bigcap_{\sU\in \sO}
\cl {\cup \tau(\sU_{t\res 1}, \sU_{t \res 2},\dots,\sU_{t\res\lh(t)}, \sU)},
\]
which clearly maintains
property~(\ref{cp}). Note also that $C_n$ is finite, and in fact has size at most
$|C_n| \leq \tau'(\sU_{t \res 1},\dots, \sU_{t \res \lh(t)-1})$, by the same proof
of Lemma~\ref{clem}.

Similarly to the base step, define $\{\sU_{t
  \conc m}\}_{m \in \omega}$ to be a countable collection of open
covers such that $\{ X\setminus \cl {\cup \tau(\sU_{t\res 1}, \sU_{t
    \res 2},\dots,\sU_{t\res\lh(t)}, \sU_{t\conc m}) }\}_{m \in
  \omega}$ covers $X\setminus\bigcup_{i \leq \lh(t)} V_{\varphi(t \res
  i)}$.  Of course, this uses the fact that $X\setminus\bigcup_{i \leq
  \lh(t)} V_{\varphi(t \res i)}$ is Lindel\"{o}f and that it is
contained in $X\setminus C_{n}$.  This completes the definition of
$\sigma$.  To show that $\sigma$ is winning, we suppose that $C_0,
V_0, C_1, V_1, \dots$ is a full run of the finite open game which is
consistent with $\sigma$.  Note that since this run is consistent with
$\sigma$, we can recover the tree of open covers $\{\sU_s\}_{s \in
\omega^{<\omega}}$ associated to this run which satisfies the
properties~(\ref{cp}) and (\ref{up}) above.  Suppose that $X \neq
\bigcup_n V_n$, and let $x \in X \setminus\bigcup_n V_n$.  In
particular, $x \in X \setminus V_0$.  Now we use property~(\ref{up})
to obtain $i_0$ such that $x \not \in \cl {\cup \tau(\sU_{(i_0)})}$.
In general, supposing we have $i_0, i_1, \dots, i_{n-1}$ where $x \not
\in \cl {\cup \tau(\sU_{(i_0)}, \dots\sU_{(i_0, \dots, i_k)})}$ for
any $k < n$, then use the fact that $x \in X \setminus \bigcup_{s
  \subseteq \varphi^{-1}(n)} V_{\varphi(s)}$ and property~(\ref{up})
to obtain $i_n$ so that $x \not \in \cl {\cup \tau(\sU_{(i_0)},
  \dots,\sU_{(i_0, \dots, i_{n-1})}, \sU_{(i_0, \dots, i_n)})}$.
This builds a branch through the tree of open covers $\{\sU_s\}_{s \in
  \omega^{<\omega}}$, associated to this run, which has the property
that $x$ is not in any of the closures of $\tau$'s moves in response
to this branch.  This contradicts the assumption that $\tau$ was a
winning strategy.
\end{proof}

\begin{corollary}
For any $T_2$ space $X$ and any $f \colon \omega \to \omega$, the games
$G_1(\sO,\sO)$ and $G_f(\sO,\sO)$ are equivalent.
\end{corollary}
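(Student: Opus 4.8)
The plan is to observe that $\krbg f$ sits between $\rbg$ and $\rmg$ in the natural ordering of these games --- separately for each of the two players --- so that Theorem~\ref{mt}, which collapses the two endpoints, automatically collapses everything in between. Concretely, the chain of implications recorded in the introduction already gives, for player \II, that a winning strategy in $\rbg=\krbg 1$ yields one in $\krbg f$ (keep selecting a single set and ignore the extra choices available), and that a winning strategy in $\krbg f$ yields one in $\rmg$ (in $\rmg$ player \II simply declares $k_n=f(n)$ at the start of each round and then follows the $\krbg f$ strategy on the covers \I plays). Dually, for player \I, a winning strategy in $\rmg$ yields one in $\krbg f$ (feed the $\rmg$ strategy the fictitious declaration $k_n=f(n)$ at round $n$, which is legal since $f(n)$ is fixed in advance), and a winning strategy in $\krbg f$ yields one in $\rbg$: when \II plays a single $U_n\in\sU_n$ in $\rbg$, have \I simulate a run of $\krbg f$ in which \II plays $U_n$ together with $f(n)-1$ further, arbitrarily chosen, members of $\sU_n$; since the resulting union only grows, if \I wins the simulated run then $X\neq\bigcup_n U_n$ as well.

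First I would write out these four routine strategy transfers explicitly, pausing only on two mild bookkeeping points. In the last transfer one must know that a winning strategy for \I in $\krbg f$ never plays an open cover $\sU$ with $|\sU|\le f(n)$ at round $n$ --- otherwise \II could cover $X$ in that single move --- so that an $f(n)$-element subset of $\sU_n$ containing $U_n$ is always available. And we may assume $f(n)\ge 1$ for every $n$, since any round in which \II selects nothing contributes nothing to the union and can simply be deleted.

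With these in hand the proof is a diagram chase using Theorem~\ref{mt} to close the loops. For player \II: if \II wins $\rbg$ then \II wins $\krbg f$ by the first transfer; conversely if \II wins $\krbg f$ then \II wins $\rmg$ by the second transfer, hence \II wins $\rbg$ by Theorem~\ref{mt}. For player \I: if \I wins $\rbg$ then \I wins $\rmg$ by Theorem~\ref{mt}, hence \I wins $\krbg f$ by the third transfer; conversely if \I wins $\krbg f$ then \I wins $\rbg$ by the fourth transfer. Thus a winning strategy for either player in one of $G_1(\sO,\sO)$, $G_f(\sO,\sO)$ gives one for the same player in the other. I do not expect any genuine obstacle here: the entire content of the corollary is Theorem~\ref{mt}, and everything else is elementary strategy simulation; the only thing requiring care is keeping the directions straight --- wins for \II propagate ``upward'' from $\rbg$ toward $\rmg$, wins for \I propagate ``downward'' --- together with the two degenerate cases noted above.
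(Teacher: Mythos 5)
Your proof is correct and is essentially the argument the paper intends: $\krbg{f}$ sits between $\rbg$ and $\rmg$ via exactly the four routine strategy transfers you describe, and Theorem~\ref{mt} collapses the two endpoints. One small caveat: your reduction to the case $f(n)\geq 1$ by ``deleting empty rounds'' silently fails when $f(n)=0$ for all but finitely many $n$ (only finitely many rounds survive, and then the corollary is genuinely false, e.g.\ for an infinite discrete space); however, the paper's own definition of $\krbg{f}$ takes $f\colon\omega\to\omega^{>0}$, so this is an imprecision in the corollary's statement rather than a gap in your argument.
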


In particular, we have the following corollary which answers Problem~4.5 of 
\cite{AurBelDias2016} for $T_2$ spaces.

\begin{corollary}
For any $T_2$ space $X$ and any $n \in \omega$, the games $G_1(\sO,\sO)$
and $G_n(\sO,\sO)$ are equivalent.
\end{corollary}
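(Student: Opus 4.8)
The plan is to obtain this immediately from the preceding corollary, applied to the constant function $f\equiv n$. For completeness, here is the mechanism behind that corollary, organized so that all real content is funneled through Theorem~\ref{mt}: since Theorem~\ref{mt} already identifies $\rmg$ with $\rbg=G_1(\sO,\sO)$ for both players, it is enough to show, separately for each player, that a winning strategy in $\krbg{n}$ exists iff a winning strategy in $\rmg$ exists.

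For player~\II, one direction is trivial: a winning strategy for \II in $\rbg$ also wins $\krbg{n}$, since \II may follow it and pad each move out to $n$ sets from the current cover, which only enlarges the union \II builds. For the converse, a winning strategy $\tau$ for \II in $\krbg{n}$ transfers to $\rmg$: \II declares $k_m=n$ at the start of every round and then plays according to $\tau$, so every run of $\rmg$ played this way is a run of $\krbg{n}$ won by \II. Combining with Theorem~\ref{mt} yields $\II\wins\krbg{n}\Leftrightarrow\II\wins\rbg$.

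For player~\I, the easy direction is $\I\wins\krbg{n}\Rightarrow\I\wins\rbg$: \I plays the covers dictated by a winning $\krbg{n}$-strategy while internally simulating a run of $\krbg{n}$ in which each single set played by \II in $\rbg$ is padded to an $n$-element legal move; since the $\krbg{n}$-strategy wins, the simulated (hence the actual) run fails to cover $X$. The substantive direction is $\I\wins\rbg\Rightarrow\I\wins\krbg{n}$. By Pawlikowski's theorem~\cite{Pawlikowski1994}, $X$ fails $\sS_1(\sO,\sO)$, so fix open covers $\{\sV_l\}_{l\in\omega}$ admitting no selector. Partition $\omega$ into the consecutive blocks $B_m=\{mn,mn+1,\dots,mn+n-1\}$ and let \I play, at round $m$ of $\krbg{n}$, the common refinement $\sU_m=\sV_{mn}\refine\sV_{mn+1}\refine\cdots\refine\sV_{mn+n-1}$. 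When \II answers with $U_m^0,\dots,U_m^{n-1}\in\sU_m$, pick $V_{mn+i}\in\sV_{mn+i}$ with $U_m^i\subseteq V_{mn+i}$ for each $i<n$; then $\bigcup_m\bigcup_{i<n}U_m^i\subseteq\bigcup_l V_l\neq X$, so \I wins. This is precisely the block version of the first paragraph of the proof of Theorem~\ref{mt}.

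I do not anticipate a real obstacle: everything hard is already absorbed into Theorem~\ref{mt} (the case where \II has a winning strategy in $\rmg$, which is where the $T_2$ hypothesis and the finite-open game enter), and the steps above are routine simulations. The only points that need a word of care are lining up the round structures in the simulations and the degenerate possibility that some $\sU_m$ has fewer than $n$ elements, which is either handled by the standard convention permitting \II to repeat sets or is vacuous because such a $\sU_m$ already covers $X$.
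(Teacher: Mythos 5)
Your proposal is correct and takes essentially the same route the paper intends: the corollary is stated without proof as an immediate consequence of Theorem~\ref{mt} together with the easy transfer implications noted in the introduction (padding \II's moves, fixing \II's declarations to the constant $n$, and restricting/simulating \I's strategies), which is exactly what you spell out. Your block-refinement argument for the direction ``\I wins $\rbg$ implies \I wins $\krbg{n}$'' is just the first paragraph of the proof of Theorem~\ref{mt} adapted to fixed block length, and could equally be replaced by citing that theorem directly.
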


\section{Open Questions} \label{sec:oq}

A natural question is whether we can drop the assumption that $X$ is $T_2$ from the hypothesis
of Theorem~\ref{mt}. In fact, the authors of \cite{AurBelDias2016} originally
asked if for any topological space the games
$\krbg{1}$ and $\krbg{2}$ are equivalent. Our Theorem~\ref{mt} shows these games are
equivalent for any $T_2$ space, but the $T_2$ assumption seems necessary for the argument.
We are not aware of any space (with no assumptions on the space) for which these games are not equivalent.
Since the determinacy of these games is not guaranteed in $\zf$, it is possible even that the equivalence
for arbitrary spaces is independent of $\zf$.

\begin{question} \label{qa}
Can we weaken the hypotheses of Theorem~\ref{mt} from $T_2$ to $T_1$, or even remove it entirely?
That is, can we prove in $\zfc$ that the games $\krbg{1}$ and $\krbg{2}$ are equivalent
for any space $X$?
\end{question}

One possibility for a negative answer to Question~\ref{qa} would be to construct in $\zfc$
a space for which the games are not equivalent (in this case the game $\krbg{1}$ is not
determined, and \II must win the other game). It is also possible that the existence of a
space for which the games are not equivalent is independent of $\zfc$. So we ask:

\begin{question}
Is it consistent with $\zfc$ that there is a space $X$ for which the games
$\krbg{1}$ and $\krbg{2}$ are not equivalent. Is the existence of such a space
consistent with $\zf$? In particular are the games equivalent in models of determinacy?
\end{question}

%\nocite{*}
\bibliographystyle{amsplain}
\bibliography{selection}
\end{document}